\documentclass[12pt,twoside]{amsart}
\usepackage{amssymb,amscd,amsxtra,calc}
\usepackage{amsmath}
\usepackage{xypic}
\usepackage[graph]{xy}
\usepackage{supertabular}
\usepackage{longtable}

\newtheorem{theorem}{Theorem}

\newtheorem{lemma}[theorem]{Lemma}

\newtheorem{remark}[theorem]{Remark}

\newtheorem{Claim}[theorem]{Claim}

\newtheorem*{ack}{Acknowledgments}


\title[splitting criterion for 2-bundles on Hirzebruch surfaces]{A cohomological splitting criterion for rank 2 vector bundles on Hirzebruch surfaces}
\author{Kazunori Yasutake}
\date{\today}
\subjclass[2010]{Primary 14J60; Secondary 14J26.}
\keywords{vector bundle, splitting, Hirzebruch surface.}
\address{Organization for the Strategic Coordination of Research and 
Intellectual Properties, Meiji University, Kanagawa 214-8571 Japan}
\email{tz13008@meiji.ac.jp}


\begin{document}

\maketitle

\begin{abstract}
In this note, we give a cohomological characterization of all rank 2 split vector bundles on Hirzebruch surfaces.
\end{abstract}

\section{Introduction}
Throughout this paper we work over an algebraic closed field $k$.
A vector bundle on a smooth projective variety is called $\it{split}$ if it is decomposed into a direct sum of line bundles.
Recently, Fulger and Marchitan $($\cite{fulger}$)$ obtained a cohomological characterization of some rank 2 split vector bundles on Hirzebruch surfaces over the complex number field , by using Buchdahl's Beilinson type spectral sequence $($\cite{buchdahl}$)$. 
However, it seems difficult to apply their argument to general cases.
The purpose of this paper is to give a simple characterization of all rank 2 split vector bundles on Hirzebruch surfaces by cohomological informations in arbitrary characteristic. 

\begin{theorem}
Let $\mathcal {E}$ be a $\mathrm{rank}$ $2$ vector bundle on a Hirzebruch surface $\mathbb{F}_n=\mathbb{P}_{\mathbb{P}^1}(\mathcal{O}_{\mathbb{P}^1}\oplus\mathcal{O}_{\mathbb{P}^1}(n))$.
Let $\mathcal{F}$ be a split rank $2$ vector bundle on $\mathbb{F}_n$.
If $\dim_k H^i(\mathcal{E}\otimes\mathcal{L})=\dim_k H^i(\mathcal{F}\otimes\mathcal{L})$ for any $0\leqslant i \leqslant 2$ and any line bundle $\mathcal{L}$ on $\mathbb{F}_n$,
then $\mathcal{E}$ is isomorphic to $\mathcal{F}$.
\end{theorem}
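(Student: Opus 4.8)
\emph{Roadmap.} The argument proceeds in three stages: first pin down the Chern classes of $\mathcal E$ by Riemann--Roch, then cut $\mathcal E$ by an explicit line subbundle to exhibit it as an extension, and finally show that this extension splits. Throughout, write $\mathrm{Pic}(\mathbb F_n)=\mathbb Z C_0\oplus\mathbb Z F$, where $F$ is a fibre of $\pi\colon\mathbb F_n\to\mathbb P^1$ and $C_0$ is the section with $C_0^2=-n$; a class $aC_0+bF$ is effective exactly when $a\geqslant 0$ and $b\geqslant 0$, which endows $\mathrm{Pic}(\mathbb F_n)$ with a partial order $\leqslant$. Since $\mathcal F$ splits, write $\mathcal F=\mathcal O(D_1)\oplus\mathcal O(D_2)$; the unordered pair $\{D_1,D_2\}$ is determined by $\mathcal F$, and after reordering we may assume that $D_2-D_1$ is not a nonzero effective class, i.e.\ either $D_1\geqslant D_2$ or $D_1,D_2$ are incomparable. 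Applying Riemann--Roch on the surface to $\mathcal E\otimes\mathcal L$ and $\mathcal F\otimes\mathcal L$, the hypothesis gives $\chi(\mathcal E\otimes\mathcal L)=\chi(\mathcal F\otimes\mathcal L)$ for every $\mathcal L$; comparing these as functions of $\mathcal L$ and using non-degeneracy of the intersection pairing on $\mathbb F_n$ yields first $c_1(\mathcal E)=c_1(\mathcal F)=D_1+D_2$ and then $c_2(\mathcal E)=c_2(\mathcal F)=D_1\cdot D_2$.

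\emph{Producing the subbundle.} Taking $\mathcal L=\mathcal O(-D_1)$ we have $\dim_k\mathrm{Hom}(\mathcal O(D_1),\mathcal E)=\dim_k H^0(\mathcal E\otimes\mathcal O(-D_1))=\dim_k H^0(\mathcal F\otimes\mathcal O(-D_1))\geqslant\dim_k H^0(\mathcal O)=1$, so there is a nonzero $\varphi\colon\mathcal O(D_1)\to\mathcal E$. It is injective, and saturating its image inside the locally free sheaf $\mathcal E$ yields an exact sequence
\[
0\longrightarrow\mathcal O(M)\longrightarrow\mathcal E\longrightarrow\mathcal I_Z(D_1+D_2-M)\longrightarrow 0
\]
with $M\geqslant D_1$ and $Z$ a zero-dimensional (possibly empty) subscheme. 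Twisting by $\mathcal O(-M)$ gives $\mathcal O\hookrightarrow\mathcal E\otimes\mathcal O(-M)$, so $\dim_k H^0(\mathcal F\otimes\mathcal O(-M))=\dim_k H^0(\mathcal E\otimes\mathcal O(-M))\neq 0$, whence $M\leqslant D_1$ or $M\leqslant D_2$; combined with $M\geqslant D_1$ and the normalization of $\{D_1,D_2\}$, this forces $M=D_1$. Hence $\varphi$ is an isomorphism onto its saturation, and comparing second Chern classes gives $\deg Z=c_2(\mathcal E)-D_1\cdot D_2=0$, so $Z=\varnothing$ and
\[
0\longrightarrow\mathcal O(D_1)\longrightarrow\mathcal E\longrightarrow\mathcal O(D_2)\longrightarrow 0
\]
is exact, classified by a class $e\in\mathrm{Ext}^1(\mathcal O(D_2),\mathcal O(D_1))=H^1(\mathbb F_n,\mathcal O(D_1-D_2))$.

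\emph{Splitting the extension.} Twisting the last sequence by $\mathcal O(-D_2)$ gives $0\to\mathcal O(D_1-D_2)\to\mathcal E\otimes\mathcal O(-D_2)\to\mathcal O\to 0$, whose connecting homomorphism $H^0(\mathcal O)\to H^1(\mathcal O(D_1-D_2))$ sends $1$ to $e$. If $e\neq 0$ this map is injective, so $\dim_k H^0(\mathcal E\otimes\mathcal O(-D_2))=\dim_k H^0(\mathcal O(D_1-D_2))$, while $\dim_k H^0(\mathcal F\otimes\mathcal O(-D_2))=\dim_k H^0(\mathcal O(D_1-D_2))+\dim_k H^0(\mathcal O)=\dim_k H^0(\mathcal O(D_1-D_2))+1$, contradicting the hypothesis with $\mathcal L=\mathcal O(-D_2)$. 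Therefore $e=0$ and $\mathcal E\cong\mathcal O(D_1)\oplus\mathcal O(D_2)=\mathcal F$.

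\emph{Where the difficulty lies.} I expect the middle stage to need the most care: one must rule out that the saturated subsheaf has class strictly larger than $D_1$, and it is here that both the shape of the effective cone of $\mathbb F_n$ and the \emph{full} cohomological hypothesis enter — through the vanishing pattern of $H^0$ of twists, not merely through $\chi$ — with the normalization of the pair $\{D_1,D_2\}$ being exactly what makes the intersection-number bookkeeping close up. The final stage is then a short exact-sequence chase, but it is precisely there that non-split extensions with the correct Chern classes, which genuinely occur because $H^1(\mathcal O(D_1-D_2))$ need not vanish on $\mathbb F_n$, get excluded by the comparison of $H^0$'s.
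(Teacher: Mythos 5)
Your proof is correct and follows essentially the same route as the paper: Riemann--Roch to match the Chern classes, a section of a suitable twist yielding a short exact sequence with line-bundle quotient (your saturation analysis is the paper's argument that the section cannot vanish on a divisor, combined with $c_2=0$ to empty the zero locus), and a comparison of $h^0$'s to kill the extension class. The only cosmetic differences are that the paper normalizes so that $D_1=0$ and phrases the splitting as surjectivity of $\mathrm{Hom}(\mathcal{L},\mathcal{E})\rightarrow\mathrm{Hom}(\mathcal{L},\mathcal{L})$ via a dimension count rather than via the connecting homomorphism hitting the extension class.
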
 

It seems that our assumption of Theorem $1$ is stronger than the one of Theorem in \cite{fulger}.
However, if we know the Chern classes of $\mathcal{E}$, we need only a few assumptions $($see Lemma $2)$.
In the cases treated in  \cite{fulger}, the assumption of Lemma $2$ is essentially equivalent to the one of the Theorem of \cite{fulger}.    

\section*{Notation}

For a smooth projective variety $X$ and a vector bundle $\mathcal{E}$ on $X$, let $\mathbb{P}_X(\mathcal{E})$ be the projectivization of $\mathcal{E}$ in the sense of Grothendieck.
We denote $\dim_k H^i(\mathcal{E})$ by $h^i(\mathcal{E})$.
We denote the Hirzebruch surface $\mathbb{P}_{\mathbb{P}^1}(\mathcal{O}_{\mathbb{P}^1}\oplus\mathcal{O}_{\mathbb{P}^1}(n))$ by $\mathbb{F}_n$, the natural projection $\mathbb{F}_n\rightarrow\mathbb{P}^1$ by $\pi$, the minimal section on $\mathbb{F}_n$ by $\sigma$ $($i.e. $\sigma\cong\mathbb{P}^1$, $\sigma^2=-n)$ and a fiber of $\pi$ by $f$.
It is well-known that $\mathrm{Pic}(\mathbb{F}_n)=\mathbb{Z}\sigma\oplus\mathbb{Z}f$.

\section{Proof of Theorem 1}
To give the proof of Theorem 1, we show the following lemma.
\begin{lemma}
\hspace{1mm}
Let $\mathcal{E}$ be a $\mathrm{rank}$ $2$ vector bundle on $\mathbb{F}_n$.
Assume that $h^0(\mathcal{E}(-\sigma))=h^0(\mathcal{E}(-f))=0$, $h^0(\mathcal{E})\geqslant 1$ and $c_2(\mathcal{E})=0$.

\begin{enumerate}
\item Assume that $c_1(\mathcal{E})=-a\sigma-bf$, where $a$ and $b$ are nonnegative integers,
and that $h^0(\mathcal{E}(a\sigma+bf))\geqslant1+h^0(\mathcal{O}_{\mathbb{F}_n}(a\sigma+bf))$.
Then $\mathcal{E}$ is isomorphic to $\mathcal{O}_{\mathbb{F}_n}\oplus \mathcal{O}_{\mathbb{F}_n}(-a\sigma-bf)$.  
\item Assume that $c_1(\mathcal{E})=a\sigma-bf$, where $a$ and $b$ are integers such that $ab>0$,
and that $h^0(\mathcal{E}(-a\sigma+bf))\geqslant1+h^0(\mathcal{O}_{\mathbb{F}_n}(-a\sigma+bf))$
Then $\mathcal{E}$ is isomorphic to $\mathcal{O}_{\mathbb{F}_n}\oplus \mathcal{O}_{\mathbb{F}_n}(a\sigma-bf)$.  
\end{enumerate}
\end{lemma}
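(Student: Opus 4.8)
The plan is to produce a nonzero global section of $\mathcal{E}$, use it to present $\mathcal{E}$ as an extension of line bundles, invoke the two vanishing hypotheses to force the sub--line--bundle to be trivial, and then use the cohomological inequality to split the extension. For the first step: since $h^0(\mathcal{E})\geqslant 1$, choose a nonzero section $s\in H^0(\mathcal{E})$ and let $D$ be the divisorial part of its zero scheme. Then $s$ factors as $\mathcal{O}_{\mathbb{F}_n}\hookrightarrow\mathcal{O}_{\mathbb{F}_n}(D)\hookrightarrow\mathcal{E}$, and there is a short exact sequence
\[
0\longrightarrow\mathcal{O}_{\mathbb{F}_n}(D)\longrightarrow\mathcal{E}\longrightarrow\mathcal{I}_Z\otimes\mathcal{O}_{\mathbb{F}_n}(c_1(\mathcal{E})-D)\longrightarrow 0 ,
\]
where $Z$ is a zero-dimensional subscheme (possibly empty) with $\mathrm{length}(Z)=c_2(\mathcal{E})-D\cdot(c_1(\mathcal{E})-D)$.

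The crucial step is to show that $D=0$. Because the effective cone of $\mathbb{F}_n$ is spanned by $\sigma$ and $f$, effectivity of $D$ forces $D=\alpha\sigma+\beta f$ with $\alpha,\beta\geqslant 0$. If $\alpha\geqslant 1$, then $D-\sigma=(\alpha-1)\sigma+\beta f$ is again effective, so composing $\mathcal{O}_{\mathbb{F}_n}(\sigma)\hookrightarrow\mathcal{O}_{\mathbb{F}_n}(D)\hookrightarrow\mathcal{E}$ produces a nonzero element of $H^0(\mathcal{E}(-\sigma))$, contradicting $h^0(\mathcal{E}(-\sigma))=0$; hence $\alpha=0$. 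In the same way $h^0(\mathcal{E}(-f))=0$ forces $\beta=0$, so $D=0$. Then $\mathrm{length}(Z)=c_2(\mathcal{E})=0$, so $Z=\emptyset$; thus $s$ vanishes nowhere, $\mathcal{O}_{\mathbb{F}_n}$ is a subbundle of $\mathcal{E}$, and the sequence becomes an exact sequence of vector bundles
\[
0\longrightarrow\mathcal{O}_{\mathbb{F}_n}\longrightarrow\mathcal{E}\longrightarrow\mathcal{O}_{\mathbb{F}_n}(c_1(\mathcal{E}))\longrightarrow 0 .
\]

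To finish, I would tensor this with the line bundle $\mathcal{M}:=\mathcal{O}_{\mathbb{F}_n}(-c_1(\mathcal{E}))$, obtaining $0\to\mathcal{M}\to\mathcal{E}\otimes\mathcal{M}\to\mathcal{O}_{\mathbb{F}_n}\to 0$, and take cohomology. The long exact sequence shows that $h^0(\mathcal{E}\otimes\mathcal{M})$ equals $h^0(\mathcal{M})$ plus the dimension of the image of $H^0(\mathcal{E}\otimes\mathcal{M})\to H^0(\mathcal{O}_{\mathbb{F}_n})$, and that image has dimension at most $h^0(\mathcal{O}_{\mathbb{F}_n})=1$. In case (1) one has $\mathcal{M}=\mathcal{O}_{\mathbb{F}_n}(a\sigma+bf)$ and the hypothesis is exactly $h^0(\mathcal{E}\otimes\mathcal{M})\geqslant 1+h^0(\mathcal{M})$; in case (2) one has $\mathcal{M}=\mathcal{O}_{\mathbb{F}_n}(-a\sigma+bf)$ and the hypothesis is again this same inequality. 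Hence in both cases $H^0(\mathcal{E}\otimes\mathcal{M})\to H^0(\mathcal{O}_{\mathbb{F}_n})$ is surjective; lifting the constant section $1$ splits the sequence, so $\mathcal{E}\otimes\mathcal{M}\cong\mathcal{O}_{\mathbb{F}_n}\oplus\mathcal{M}$, i.e. $\mathcal{E}\cong\mathcal{O}_{\mathbb{F}_n}\oplus\mathcal{O}_{\mathbb{F}_n}(c_1(\mathcal{E}))$, which is $\mathcal{O}_{\mathbb{F}_n}\oplus\mathcal{O}_{\mathbb{F}_n}(-a\sigma-bf)$ in case (1) and $\mathcal{O}_{\mathbb{F}_n}\oplus\mathcal{O}_{\mathbb{F}_n}(a\sigma-bf)$ in case (2), as required. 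I expect the only real obstacle to be the middle step: converting the scalar vanishings $h^0(\mathcal{E}(-\sigma))=h^0(\mathcal{E}(-f))=0$ into $D=0$ genuinely uses the shape of the effective cone of $\mathbb{F}_n$, whereas the construction of the extension and the final splitting are standard.
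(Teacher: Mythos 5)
Your proof is correct and follows essentially the same route as the paper: produce a nowhere-vanishing section from $h^0(\mathcal{E})\geqslant 1$ together with the two vanishings and $c_2=0$, obtain the extension $0\to\mathcal{O}_{\mathbb{F}_n}\to\mathcal{E}\to\mathcal{O}_{\mathbb{F}_n}(c_1(\mathcal{E}))\to 0$, and split it by the dimension count coming from the hypothesis (your cohomology sequence for $\mathcal{E}\otimes\mathcal{M}$ is the paper's $\mathrm{Hom}(\mathcal{L},-)$ sequence in disguise). The only cosmetic difference is that you make the effective-cone argument for $D=0$ explicit where the paper compresses it into one inequality.
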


\begin{proof}
\hspace{1mm}
Put $\mathcal{L}=\mathcal{O}_{\mathbb{F}_n}(-a\sigma-bf)$ in Case 1 and  $\mathcal{L}=\mathcal{O}_{\mathbb{F}_n}(a\sigma-bf)$ in Case 2.
Since $h^0(\mathcal{E})\not=0$, we can take a nonzero section $0\not=s\in \mathrm{H}^0(\mathcal{E})$.
Put $Z:=(s=0)$.
If $s$ takes zero on some nonzero effective divisor $D>0$, we have a nonzero section of $\mathrm{H}^0(\mathcal{E}(-D))$.
This is a contradiction because $h^0(\mathcal{E}(-D))\leqslant\max \{h^0(\mathcal{E}(-\sigma)), h^0(\mathcal{E}(-f))\}=0$.
Therefore, $Z$ is of codimension 2. 
Since $c_2(\mathcal{E})=0$, we have $Z=\emptyset$.
Hence we obtain an exact sequence,
\[0\rightarrow \mathcal{O}_{\mathbb{F}_n}\xrightarrow{s}\mathcal{E}\rightarrow\mathcal{L}\rightarrow 0,\] 
since $c_1(\mathcal{E})=-a\sigma-bf$ in Case 1 and $c_1(\mathcal{E})=a\sigma-bf$ in Case 2.
We show that the above exact sequence splits.
Consider the long exact sequence ;
\[0\rightarrow \mathrm{Hom}(\mathcal{L},\mathcal{O}_{\mathbb{F}_n})\rightarrow \mathrm{Hom}(\mathcal{L},\mathcal{E})\rightarrow \mathrm{Hom}(\mathcal{L}, \mathcal{L})
\rightarrow\mathrm{Ext}^1(\mathcal{L}, \mathcal{O}_{\mathbb{F}_n})\rightarrow \cdots.\]
By the assumption, we have 
\begin{eqnarray*}
&&\dim_k \mathrm{Hom}(\mathcal{L},\mathcal{E})=h^0(\mathcal{E}\otimes\mathcal{L}^{\vee})\\
&&\geqslant h^0(\mathcal{L}^{\vee})+h^0(\mathcal{O}_{\mathbb{F}_n})=\dim_k \mathrm{Hom}(\mathcal{L},\mathcal{O}_{\mathbb{F}_n})+\dim_k \mathrm{Hom}(\mathcal{L}, 
\mathcal{L}).
\end{eqnarray*}
Therefore, the homomorphism $\mathrm{Hom}(\mathcal{L},\mathcal{E})\rightarrow \mathrm{Hom}(\mathcal{L}, \mathcal{L})$ is surjective.
Hence $\mathcal{E}$ is isomorphic to $\mathcal{O}_{\mathbb{F}_n}\oplus\mathcal{L}$.
\end{proof}

\begin{remark}
Let $\mathcal{E}$ be a split $\mathrm{rank}$ $2$ vector bundle on $\mathbb{F}_n$.
It is readily seen that there exists a line bundle $\mathcal{L}$ on $\mathbb{F}_n$ such that $c_1(\mathcal{E}\otimes\mathcal{L})=\alpha\sigma+\beta f$ with $\alpha, \beta\le 0$ or $\alpha\beta>0$ and that $c_2(\mathcal{E}\otimes\mathcal{L})=0$.
Therefore, by Lemma 2, we can characterize all split $\mathrm{rank}$ $2$ vector bundles on $\mathbb{F}_n$.   
\end{remark}

From now on, we give a proof of $\mathrm{Theorem}$ $1$.
We will begin with a proof of the following Claim.
\begin{Claim}\label{claim}
Under the assumptions of $\mathrm{Theorem}$ $1$, we have $\det(\mathcal{E})=\det(\mathcal{F})$ in $\mathrm{Pic}(\mathbb{F}_n)$ and $\deg c_2(\mathcal{E})=\deg c_2(\mathcal{F})$.
\end{Claim}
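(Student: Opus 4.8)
The plan is to recover the Chern data of $\mathcal{E}$ and $\mathcal{F}$ from Euler characteristics. Since $\chi(\mathcal{G}) = h^0(\mathcal{G}) - h^1(\mathcal{G}) + h^2(\mathcal{G})$ for any coherent sheaf $\mathcal{G}$ on the surface $\mathbb{F}_n$, the hypothesis of Theorem 1 gives at once
\[
\chi(\mathcal{E}\otimes\mathcal{L}) = \chi(\mathcal{F}\otimes\mathcal{L}) \qquad\text{for every } \mathcal{L}\in\mathrm{Pic}(\mathbb{F}_n),
\]
so it is enough to see that this family of numerical identities forces $c_1$ and $\deg c_2$ to agree.

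The second step is to expand both sides by Hirzebruch--Riemann--Roch. Write $M = c_1(\mathcal{L})$, $D = c_1(\mathcal{E})$, $D' = c_1(\mathcal{F})$, $e = \deg c_2(\mathcal{E})$, $e' = \deg c_2(\mathcal{F})$, and $K = K_{\mathbb{F}_n}$. Using $c_1(\mathcal{E}\otimes\mathcal{L}) = D + 2M$ and $c_2(\mathcal{E}\otimes\mathcal{L}) = e + D\cdot M + M^2$, Riemann--Roch on $\mathbb{F}_n$ yields
\[
\chi(\mathcal{E}\otimes\mathcal{L}) = \tfrac12 D^2 - e + D\cdot M + M^2 - \tfrac12 K\cdot D - K\cdot M + 2\chi(\mathcal{O}_{\mathbb{F}_n}),
\]
and the analogous formula for $\mathcal{F}$. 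Subtracting, the $M^2$, $K\cdot M$ and $\chi(\mathcal{O}_{\mathbb{F}_n})$ terms cancel and we are left, for all $M$, with
\[
0 = (D - D')\cdot M + \Bigl(\tfrac12 D^2 - \tfrac12 D'^2 - \tfrac12 K\cdot(D-D') - e + e'\Bigr).
\]
Thus the difference is an affine function of $M\in\mathrm{Pic}(\mathbb{F}_n)$ whose linear part is $M\mapsto (D-D')\cdot M$ and whose constant term depends only on $D$, $D'$, $e$, $e'$.

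The third step uses that the intersection pairing on $\mathbb{F}_n$ is non-degenerate: in the basis $\sigma, f$ one has $\sigma^2 = -n$, $\sigma\cdot f = 1$, $f^2 = 0$, so its Gram matrix has determinant $-1$, and $\mathrm{Pic}(\mathbb{F}_n) = \mathbb{Z}\sigma\oplus\mathbb{Z}f$ is torsion-free. Hence $(D-D')\cdot M = 0$ for all $M$ — indeed already for $M = \sigma$ and $M = f$ — implies $D = D'$ in $\mathrm{Pic}(\mathbb{F}_n)$, that is $\det(\mathcal{E}) = \det(\mathcal{F})$. Substituting $D = D'$ back into the last display makes every term vanish except $-e + e'$, so $\deg c_2(\mathcal{E}) = \deg c_2(\mathcal{F})$ as well.

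I do not expect a genuine obstacle here: the argument is essentially bookkeeping with Riemann--Roch, and the only point deserving an explicit word is the implication ``numerically trivial $\Rightarrow$ trivial in $\mathrm{Pic}$'', which is immediate from the description of $\mathrm{Pic}(\mathbb{F}_n)$ and its intersection form recalled in the Notation section (there is in particular no torsion to worry about). If one prefers to avoid quoting $\chi(\mathcal{O}_{\mathbb{F}_n}) = 1$ and the values $K_{\mathbb{F}_n}^2 = 8$, $c_2(\mathbb{F}_n) = 4$, one can instead simply form the differences of $\chi(\mathcal{E}\otimes\mathcal{L}) = \chi(\mathcal{F}\otimes\mathcal{L})$ over the four line bundles $\mathcal{O}_{\mathbb{F}_n}$, $\mathcal{O}_{\mathbb{F}_n}(\sigma)$, $\mathcal{O}_{\mathbb{F}_n}(f)$, $\mathcal{O}_{\mathbb{F}_n}(\sigma+f)$ and solve the resulting linear system for $D - D'$ and $e - e'$ directly.
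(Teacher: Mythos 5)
Your proof is correct, but it pins down $c_1$ by a different mechanism than the paper. You expand $\chi(\mathcal{E}\otimes\mathcal{L})=\chi(\mathcal{F}\otimes\mathcal{L})$ directly via Hirzebruch--Riemann--Roch on the surface, observe that the difference is an affine function of $M=c_1(\mathcal{L})$ with linear part $(D-D')\cdot M$, and then use the explicit unimodular intersection form on $\mathrm{Pic}(\mathbb{F}_n)=\mathbb{Z}\sigma\oplus\mathbb{Z}f$ (Gram determinant $-1$, no torsion) to conclude $D=D'$, after which the constant term forces $e=e'$. The paper instead restricts $\mathcal{E}$ and $\mathcal{F}$ to a smooth very ample curve $D$, deduces $\chi(\mathcal{E}|_D)=\chi(\mathcal{F}|_D)$ from the two Euler characteristics $\chi(\cdot)$ and $\chi(\cdot(-D))$, applies Riemann--Roch on the curve to get $c_1(\mathcal{E})\cdot D=c_1(\mathcal{F})\cdot D$, concludes numerical equivalence, and then invokes rationality of $\mathbb{F}_n$ to upgrade numerical to linear equivalence; only the $c_2$ step uses surface Riemann--Roch. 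Your route is more computational and self-contained for $\mathbb{F}_n$, exploiting the concrete description of its Picard lattice; the paper's route is softer and is exactly what generalizes to Theorem 5 for an arbitrary surface with $\mathrm{Pic}(S)\cong\mathbb{Z}$, where one does not have such an explicit basis but where numerical equivalence still determines the class. Both arguments are complete; the only small point in yours worth stating explicitly is that the vanishing of the affine function at $M=0$ kills the constant term first, whence $(D-D')\cdot M=0$ for all $M$, which you effectively do.
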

\begin{proof}
Take arbitrarily a very ample divisor $D$ on $\mathbb{F}_n$.
We may assume that $D$ is smooth.
By the assumptions, we have $\chi(\mathcal{E})=\chi(\mathcal{F})$ and $\chi(\mathcal{E}(-D))=\chi(\mathcal{F}(-D))$. 
Therefore, we obtain $\chi(\mathcal{E}|_D)=\chi(\mathcal{F}|_D)$. 
By Riemann-Roch theorem on the smooth curve $D$, we have $c_1(\mathcal{E})\cdot D=c_1(\mathcal{F})\cdot D$ $($cf. \cite{fulton}, Example 15.2.1.$)$.
Hence $c_1(\mathcal{E})$ is numerically equivalent to $c_1(\mathcal{F})$.
Because $\mathbb{F}_n$ is rational, we get $\det(\mathcal{E})=\det(\mathcal{F})$.
We also have $\deg c_2(\mathcal{E})=\deg c_2(\mathcal{F})$ from the Riemann-Roch theorem on $\mathbb{F}_n$ $($cf. \cite{fulton}, Example 15.2.2.$)$.
\end{proof}

Now we conclude the proof of Theorem 1.
By Remark 3, we may assume that $\mathcal{F}$ is isomorphic to $\mathcal{O}\oplus\mathcal{L}$, where $\mathcal{L}=\mathcal{O}_{\mathbb{F}_n}(-a\sigma-bf)$ with nonnegative integers $a,b\geqslant 0$ or $\mathcal{L}=\mathcal{O}_{\mathbb{F}_n}(a\sigma-bf)$ with positive integers $a,b> 0$.

By $\mathrm{Claim}$ $4$, we have $c_1(\mathcal{E})=c_1(\mathcal{F})=c_1(\mathcal{L})$ and $c_2(\mathcal{E})=0$.
By the assumptions of Theorem 1, we also have $h^0(\mathcal{E}(-\sigma))=h^0(\mathcal{F}(-\sigma))=0$, $h^0(\mathcal{E}(-f))=h^0(\mathcal{F}(-f))=0$, $h^0(\mathcal{E})=h^0(\mathcal{F})\geqslant 1$ and $h^0(\mathcal{E}\otimes\mathcal{L}^{\vee})=h^0(\mathcal{F}\otimes\mathcal{L}^{\vee})=1+h^0(\mathcal{L}^{\vee})$.
Then, by Lemma 2, we obtain the result of Theorem $1$.  
\\
\hspace{1mm}
\\
Similar arguments of the proof of Theorem 1 imply the following theorem.
\begin{theorem}
Let $S$ be a smooth projective surface with the Picard group $\mathrm{Pic}(S)\cong\mathbb{Z}$.  
Let $\mathcal{E}$ be a $\mathrm{rank}$ $2$ vector bundle on $S$.
Let $\mathcal{F}$ be a split $\mathrm{rank}$ $2$ vector bundle on $S$.
If $h^i(\mathcal{E}\otimes\mathcal{L})=h^i(\mathcal{F}\otimes\mathcal{L})$ for any $0\leqslant i \leqslant 2$ and any line bundle $\mathcal{L}$ on $S$,
then $\mathcal{E}$ is isomorphic to $\mathcal{F}$.
\end{theorem}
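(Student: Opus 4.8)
The plan is to mirror the proof of Theorem 1, replacing the role of $\sigma$ and $f$ by an ample generator $\mathcal{O}_S(1)$ of $\mathrm{Pic}(S)\cong\mathbb{Z}$, and to prove along the way the analogues of Lemma 2, Remark 3, and Claim 4 in this setting. First I would record that since $\mathrm{Pic}(S)\cong\mathbb{Z}$, every rank $2$ split bundle $\mathcal{F}$ can be written, after twisting by a suitable line bundle, as $\mathcal{O}_S\oplus\mathcal{O}_S(-d)$ with $d\geqslant 0$; in particular $c_2(\mathcal{F}\otimes\mathcal{L})=0$ for an appropriate $\mathcal{L}$. This is the replacement for Remark 3 and is immediate once one notes that twisting $\mathcal{O}_S(p)\oplus\mathcal{O}_S(q)$ by $\mathcal{O}_S(-q)$ (say $q\geqslant p$) produces $\mathcal{O}_S(p-q)\oplus\mathcal{O}_S$, whose second Chern class vanishes.

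Next I would prove the surface analogue of Claim 4: under the cohomological hypothesis, $\det(\mathcal{E})=\det(\mathcal{F})$ in $\mathrm{Pic}(S)$ and $\deg c_2(\mathcal{E})=\deg c_2(\mathcal{F})$. The argument is identical to the one given: pick a smooth very ample divisor $D$ (here $D$ can be taken to be the generator, or a multiple of it, chosen smooth by Bertini); from $\chi(\mathcal{E})=\chi(\mathcal{F})$ and $\chi(\mathcal{E}(-D))=\chi(\mathcal{F}(-D))$ one gets $\chi(\mathcal{E}|_D)=\chi(\mathcal{F}|_D)$, hence $c_1(\mathcal{E})\cdot D=c_1(\mathcal{F})\cdot D$ by Riemann--Roch on the curve $D$; since $\mathrm{Pic}(S)\cong\mathbb{Z}$ is generated by an ample class, numerical equivalence forces $c_1(\mathcal{E})=c_1(\mathcal{F})$ on the nose, and then Riemann--Roch on $S$ gives $\deg c_2(\mathcal{E})=\deg c_2(\mathcal{F})$.

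Then I would state and prove the Lemma 2 analogue: if $\mathcal{E}$ is a rank $2$ bundle on $S$ with $h^0(\mathcal{E}(-1))=0$, $h^0(\mathcal{E})\geqslant 1$, $c_1(\mathcal{E})=-d\cdot[\mathcal{O}_S(1)]$ with $d\geqslant 0$, $c_2(\mathcal{E})=0$, and $h^0(\mathcal{E}(d))\geqslant 1+h^0(\mathcal{O}_S(d))$, then $\mathcal{E}\cong\mathcal{O}_S\oplus\mathcal{O}_S(-d)$. The proof is word-for-word the one in the paper: a nonzero section $s$ of $\mathcal{E}$ cannot vanish on an effective divisor (else $h^0(\mathcal{E}(-1))\neq 0$, using that every nonzero effective divisor dominates the ample generator since $\mathrm{Pic}(S)\cong\mathbb{Z}$), so its zero locus has codimension $2$; since $c_2(\mathcal{E})=0$ this zero locus is empty, giving $0\to\mathcal{O}_S\to\mathcal{E}\to\mathcal{O}_S(-d)\to 0$; the numerical inequality on global sections makes $\mathrm{Hom}(\mathcal{O}_S(-d),\mathcal{E})\to\mathrm{Hom}(\mathcal{O}_S(-d),\mathcal{O}_S(-d))$ surjective, so the sequence splits. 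Combining: twist $\mathcal{F}$ to normal form $\mathcal{O}_S\oplus\mathcal{O}_S(-d)$, use Claim to transport $c_1,c_2$ to $\mathcal{E}$ (after the same twist), and use the hypotheses at $\mathcal{L}=\mathcal{O}_S$, $\mathcal{O}_S(-1)$, $\mathcal{O}_S(d)$ to feed the Lemma, concluding $\mathcal{E}\cong\mathcal{F}$.

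The one genuinely delicate point — and the place this is not completely automatic — is the step ``a nonzero effective divisor dominates the ample generator,'' i.e. that $h^0(\mathcal{E}(-D'))\leqslant h^0(\mathcal{E}(-1))$ for every nonzero effective $D'$. On $\mathbb{F}_n$ this used that every effective divisor contains a curve numerically $\geqslant\sigma$ or $\geqslant f$; on $S$ with $\mathrm{Pic}(S)\cong\mathbb{Z}$ generated by an \emph{ample} class $H$, any nonzero effective divisor has class $mH$ with $m\geqslant 1$, so $\mathcal{E}(-D')=\mathcal{E}(-mH)$ and $h^0(\mathcal{E}(-mH))\leqslant h^0(\mathcal{E}(-H))$ since $\mathcal{O}_S((m-1)H)$ has a section — this is where one must be slightly careful that the generator can be taken effective (ample $\Rightarrow$ some positive multiple is very ample, so the generator itself, being the ``smallest'' positive class, is effective). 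Once this is in place everything else is a transcription of Sections 2--3.
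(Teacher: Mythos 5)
Your proposal follows the same route as the paper, which itself just says ``argue as in Theorem 1'': normalize $\mathcal{F}$ to $\mathcal{O}_S\oplus\mathcal{O}_S(-d)$, transport $c_1$ and $\deg c_2$ via the Euler-characteristic argument of Claim 4, produce a nowhere-vanishing section, and split the resulting extension by the dimension count on $\mathrm{Hom}$ groups. All of that is correct, and your observation that $\mathrm{Pic}(S)\cong\mathbb{Z}$ generated by an ample class upgrades numerical equivalence to equality of first Chern classes is exactly the substitute for the rationality of $\mathbb{F}_n$ used in the paper.

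The one point to correct is precisely the step you flagged as delicate. Your justification that the ample generator $H$ is effective (``being the smallest positive class'') is not a proof, and the assertion is false in general: there are surfaces with $\mathrm{Pic}\cong\mathbb{Z}$ whose ample generator has no sections (certain surfaces of general type, e.g.\ fake projective planes, give examples at the level of N\'eron--Severi). Fortunately you do not need it. The hypothesis of the theorem is available for \emph{every} line bundle, so instead of assuming only $h^0(\mathcal{E}(-1))=0$ in your Lemma analogue, assume $h^0(\mathcal{E}(-m))=0$ for all $m\geqslant 1$; this is supplied directly by $h^0(\mathcal{E}(-m))=h^0(\mathcal{F}(-m))=h^0(\mathcal{O}_S(-m))+h^0(\mathcal{O}_S(-d-m))=0$, since these line bundles have negative degree against $H$. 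Then a section of $\mathcal{E}$ vanishing on a nonzero effective divisor $D'\sim mH$ (with $m\geqslant 1$ because $D'\cdot H>0$) immediately contradicts $h^0(\mathcal{E}(-m))=0$, with no need to compare $\mathcal{E}(-mH)$ to $\mathcal{E}(-H)$. With that adjustment the proof is complete and agrees with the paper's.
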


\begin{proof}
We may assume that $\mathcal{F}\cong\mathcal{O}_S\oplus\mathcal{M}$ where $\mathcal{M}$ is a line bundle on $S$ such that $\deg\mathcal{M}\leqslant0$.
In the same manner as in Claim \ref{claim}, we can verify that $\det(\mathcal{E})=\det(\mathcal{F})$ in $\mathrm{Pic}(S)$ since $\mathrm{Pic}(S)$ is isomorphic to $\mathbb{Z}$.
Moreover we also have $\deg c_2(\mathcal{E})=\deg c_2(\mathcal{F})$.
Therefore, we have an exact sequence 
\[0\rightarrow \mathcal{O}_{S}\rightarrow\mathcal{E}\rightarrow\mathcal{M}\rightarrow 0.\] 
Moreover, we can show that $\mathcal{E}\cong\mathcal{O}_S\oplus\mathcal{M}$ in the same way as in the proof of Theorem 1. 
\end{proof}

\begin{remark}
There are many surfaces having the Picard group $\mathrm{Pic}(S)\cong\mathbb{Z}$.
In fact, on the complex number field, it is known that a very general surface $S\subseteq\mathbb{P}^3$ of degree $d\geqslant4$ has the Picard group $\mathrm{Pic}(S)\cong\mathbb{Z}$. $($cf. \cite{harris}, Theorem$)$
\end{remark}

\begin{ack}
The author would like to express his gratitude to Professors Hajime Kaji, Yasunari Nagai, Yasuyuki Nagatomo, Eiichi Sato and Noriyuki Suwa for useful comments. 
\end{ack}

\end{document}